\newtheorem{theorem}{Theorem}
\newtheorem{lemma}{Lemma}
\newtheorem{conjec}{Conjecture}
\newcommand{\stf}[2]{\genfrac[]{0pt}{}{#1}{#2}}
\newcommand{\sts}[2]{\genfrac\{\}{0pt}{}{#1}{#2}}
\begin{document}

\author{Andr\'as Bazs\'o\\
Institute of Mathematics\\
University of Debrecen\\
P. O. Box 400, H-4002 Debrecen, Hungary\\
and\\
HUN-REN-UD Equations, Functions, Curves and their Applications Research Group\\
E-mail: bazsoa@science.unideb.hu
\and
Istv\'an Mez\H{o}\\ School of Mathematics and Statistics,\\ Nanjing University of Information Science and Technology, \\ Nanjing, Jiangsu, P. R. China \\and\\Institute of Applied Mathematics, John von Neumann Faculty of Informatics, \\ \'Obuda University, Budapest, Hungary\\  E-mail: istvanmezo81@gmail.com
\and
{\'A}kos Pint{\'e}r\\
Institute of Mathematics\\
University of Debrecen\\
P. O. Box 400, H-4002 Debrecen, Hungary\\
E-mail: apinter@science.unideb.hu
\and
Szabolcs Tengely\\
Institute of Mathematics\\
University of Debrecen\\
P. O. Box 400, H-4002 Debrecen, Hungary\\
E-mail: tengely@science.unideb.hu}

\title{Singmaster-type results for Stirling numbers and some related diophantine equations}
\maketitle

\begin{abstract}Motivated by the work of David Singmaster, we study the number of times an integer can appear among the Stirling numbers of both kinds. We provide an upper bound for the occurrences of all the positive integers, and present certain questions for further study. Some numerical results and conjectures concerning the related diohantine equations are collected.
\end{abstract}

\section{Introduction}

The number of times a given positive integer $a$ occurs in the Pascal triangle is usually denoted by $N(a)$. D. Singmaster showed in 1971 \cite{Singm1}, that
\[N(a)=O(\log a).\]
He conjectured that $N(a)$ is, in fact, bounded:
\[N(a)=O(1).\]
This is \emph{Singmaster's conjecture}. A recent result shows that this conjecture indeed holds true over a triangular area inside the whole triangle \cite{Taoetal}.

It is known, that $N(3003)=8$ (see again \cite{Taoetal} for the numerical values). Furthermore, $N(a)=6$ for infinitely many $a$, see \cite{AMM}, \cite{Singm2}, \cite{deweger} and the references therein.

Bounds that are valid over all the triangle are found in the two papers of Singmaster \cite{Singm1,Singm2}, and in \cite{AEH,Kane1,Kane2}. The best result, up to now, is \cite{Kane2}
\[N(a)=O\left(\frac{(\log a)(\log\log\log a)}{(\log\log a)^3}\right).\]
The same question was recently studied with respect to the multinomial coefficients by Koninck et al. \cite{Konincketal}. For a diophantine viewpoint of this problem we refer to \cite{deweger}.
In \cite{FPP} the authors proved that all the solutions to the equation concerning the equal values of Stirling numbers of the second kind
$$
\sts{x}{k}=\sts{y}{l}
$$
in integers $x,k,y,l$ with $\max\{k,l\}\leq 50$ are $(5,2,6,5)$ and $(13,2,91,90)$. Their proof is based on the well-known Baker-Davenport reduction method and they conjectured that there is no other possible solution. Since 
\[\sts{14}{11}=\sts{364}{363}=66\,066,\]
(see Section 4) this conjecture is not true.

In this note we study the same set of problems with respect to the Stirling numbers: let $M_i(a)$ be the number of times the non-negative integer $a$ appears among the Stirling numbers of the $i$-th kind ($i=1,2$). Among other things, we are going to set explicit bounds on $M_i(a)$ by using the Lambert $W$ function. Our proof is based on the idea of Singmaster in \cite{Singm1}, but certain estimations which were elementary in the binomial coefficient case need more careful analysis here.

\section{The estimations with respect to $M_i(a)$}

\subsection{Bound for $M_2(a)$}

In order to derive our bounds on $M_2(a)$, we first need a lemma.

\begin{lemma}1) The sequence $\sts{i+j}{i}$ is monotone increasing both in $i$ and in $j$.

2) The sequence $\sts{2n}{n}$ of the central Stirling numbers of the second kind is strictly increasing.
\end{lemma}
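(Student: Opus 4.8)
The plan is to base everything on the standard recurrence $\sts{n}{k}=k\sts{n-1}{k}+\sts{n-1}{k-1}$ together with the facts that $\sts{n}{k}\ge 0$ always and $\sts{n}{k}>0$ for $1\le k\le n$. For part 1 I would treat the two directions of monotonicity separately, since they behave quite differently. Monotonicity in $j$ (with $i$ fixed) is the easy direction: applying the recurrence with $n=i+j+1$ and $k=i$ gives
$$\sts{i+j+1}{i}=i\,\sts{i+j}{i}+\sts{i+j}{i-1}\ge i\,\sts{i+j}{i}\ge \sts{i+j}{i},$$
because $i\ge 1$ and the dropped term is non-negative. Equivalently one may exhibit the injection sending a partition of $\{1,\dots,i+j\}$ into $i$ blocks to the partition of $\{1,\dots,i+j+1\}$ obtained by inserting the new point into the block containing $1$.

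The genuinely non-trivial direction is monotonicity in $i$ (with $j$ fixed), where both the upper and lower parameters increase simultaneously, so the recurrence above does not directly apply; I expect this diagonal direction to be the main obstacle. Here I would use the combinatorial model: from a partition of $\{1,\dots,i+j\}$ into $i$ blocks, adjoin the singleton block $\{i+j+1\}$ to obtain a partition of $\{1,\dots,i+j+1\}$ into $i+1$ blocks. This map is clearly injective, which yields $\sts{i+j}{i}\le \sts{i+j+1}{i+1}$. Alternatively, the identity $\sts{n+1}{k+1}=\sum_{m}\binom{n}{m}\sts{m}{k}$ isolates $\sts{i+j}{i}$ as the single summand $m=i+j$, all other terms being non-negative. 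This completes part 1.

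For part 2 I would deduce strict monotonicity of the central numbers by chaining the two inequalities from part 1. Writing $\sts{2n}{n}$ as $\sts{n+n}{n}$, increasing $j$ by one gives $\sts{2n}{n}\le\sts{2n+1}{n}$, and then increasing $i$ by one gives $\sts{2n+1}{n}\le\sts{2n+2}{n+1}=\sts{2(n+1)}{n+1}$, so the central sequence is at least weakly increasing. To upgrade to a strict inequality I would observe that the singleton-adjoining injection used in the second step is never surjective for $n\ge 1$: there always exist partitions of $\{1,\dots,2n+2\}$ into $n+1$ blocks in which $2n+2$ is not a singleton (for instance pair $2n+2$ with $1$ and split the remaining $2n$ elements into $n$ blocks). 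Hence that inequality is strict, giving $\sts{2n}{n}<\sts{2(n+1)}{n+1}$ for every $n\ge 1$. A direct check of the base case $\sts{2}{1}=1<7=\sts{4}{2}$ confirms the claim and guards against an off-by-one at $n=1$.
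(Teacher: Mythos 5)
Your proof is correct, but your assessment of which direction is hard is exactly backwards, and it is worth seeing why. You claim that for monotonicity in $i$ (with $j$ fixed) ``the recurrence above does not directly apply'' because both parameters increase simultaneously; in fact the recurrence handles precisely this diagonal step in one line, since its first term decrements \emph{both} indices at once:
\[
\sts{i+j}{i}=\sts{(i-1)+j}{i-1}+i\sts{i+j-1}{i}\ge\sts{(i-1)+j}{i-1}.
\]
This is the one case the paper writes out explicitly (the remaining claims are dismissed as ``similar''). Your singleton-block injection is the combinatorial shadow of exactly this algebraic step: the term $\sts{n-1}{k-1}$ in the recurrence counts the partitions in which the largest element forms a singleton block, so your map is the bijective proof of the same inequality, and your alternative route via $\sts{n+1}{k+1}=\sum_m\binom{n}{m}\sts{m}{k}$ is likewise valid. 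Where your write-up genuinely adds something is part 2: the paper never spells out strictness, whereas your non-surjectivity witness (put $2n+2$ in a block with $1$ and split the remaining $2n$ elements into $n$ blocks, which is possible since $\sts{2n}{n}\ge1$ for $n\ge1$) makes the strict inequality explicit; in the recurrence language this corresponds to the positivity of the term $(n+1)\sts{2n+1}{n+1}$ in $\sts{2n+2}{n+1}=\sts{2n+1}{n}+(n+1)\sts{2n+1}{n+1}$. So: correct proof, same underlying mechanism as the paper, but recalibrate your intuition --- the diagonal monotonicity is the most immediate consequence of the recurrence, not the obstacle.
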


\begin{proof}
The proofs are straightforwardly coming from the recurrence
\[\sts{n}{k}=\sts{n-1}{k-1}+k\sts{n-1}{k}.\]
For example,
\[\sts{i+j}{i}=\sts{i+j-1}{i-1}+i\sts{i+j-1}{i}\ge\sts{i-1+j}{i-1},\]
so $\sts{i+j}{i}$ is indeed monotone increasing in $i$. The other statements are similar.
\end{proof}

To express the statement with respect to $M_2(a)$, we need the Lambert $W$ function which can be defined implicitly as the solution of the transcendental equation \cite{W,Mezo}
\[W(a)e^{W(a)}=a.\]
It is known, that
\begin{equation}
W(a)\sim\log a-\log\log a\quad(a\to\infty).\label{Wasymp}
\end{equation}

We can now prove the following statement.

\begin{theorem}For all positive integer $a\ge2$
\[M_2(a)\le2+2\frac{\log a}{W\left(\frac12\log a\right)}.\]
If $a\to\infty$,
\[M_2(a)=O\left(\frac{\log a}{\log\log a-\log\log\log a}\right).\]
\end{theorem}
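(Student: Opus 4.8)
The plan is to count occurrences of $a$ among the Stirling numbers of the second kind $\sts{n}{k}$ by exploiting the two monotonicity facts from the Lemma. Recall that $\sts{n}{k}$ is nonzero precisely for $1\le k\le n$, and that by writing $n=i+j$ with $i=k$ and $j=n-k$, the Lemma says $\sts{i+j}{i}$ increases in both $i$ and $j$. The key consequence is that along any fixed antidiagonal-type slice the values are strictly controlled, so $a$ can occur at most once for each admissible value of the smaller index $k$. Thus the whole problem reduces to bounding how many values of $k$ can possibly yield $\sts{n}{k}=a$ for some $n$.

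First I would fix $k$ and ask for which $k$ the equation $\sts{n}{k}=a$ can have a solution at all. For small $k$ the Stirling numbers grow slowly in $n$, but the crucial point is the opposite extremity: when $k$ is close to $n$, the values $\sts{n}{n-1}=\binom{n}{2}$, $\sts{n}{n-2}$, etc.\ already force $a$ to be large. More usefully, for the count I would invoke part (2) of the Lemma: the central values $\sts{2n}{n}$ are strictly increasing, and they grow very fast. If $a$ lies strictly between $\sts{2m}{m}$ and $\sts{2(m+1)}{m+1}$, then any representation $\sts{n}{k}=a$ must have its smaller index $\min(k,n-k)$ bounded by roughly $m$, because a representation with both indices large would be pushed above $a$ by the monotonicity in both arguments. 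So the number of admissible $k$ is $O(m)$, and each contributes at most one occurrence by the strict monotonicity along the slice (giving the leading constant $2$ from the two symmetric-looking tails, plus lower-order terms).

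The heart of the argument, and the step I expect to be the main obstacle, is converting the growth rate of $\sts{2m}{m}$ into the explicit Lambert-$W$ bound. I would need a lower estimate of the form $\sts{2m}{m}\ge e^{c\,m\log m}$ or, inverted, an upper estimate on $m$ in terms of $a$. Setting $\sts{2m}{m}\le a$ and taking logarithms gives something like $m\log m \lesssim \tfrac12\log a$ (the factor $\tfrac12$ is exactly what produces the $W(\tfrac12\log a)$ inside the bound). Solving $m\log m = t$ for $m$ is precisely what the Lambert $W$ function does: the solution is $m = t/W(t)$, since $m\log m = t$ rewrites as $(\log m)e^{\log m}=t$, whence $\log m = W(t)$ and $m=e^{W(t)}=t/W(t)$. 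Applying this with $t=\tfrac12\log a$ yields $m \le \dfrac{\tfrac12\log a}{W(\tfrac12\log a)} = \dfrac{\log a}{2\,W(\tfrac12\log a)}$, and multiplying by the factor $2$ (for the count of admissible indices) together with the additive $2$ gives exactly the stated bound.

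Finally, the asymptotic statement follows by plugging the large-$a$ expansion \eqref{Wasymp} of the Lambert $W$ function into the explicit bound. With $t=\tfrac12\log a$ we have $W(t)\sim \log t - \log\log t = \log(\tfrac12\log a)-\log\log(\tfrac12\log a)\sim \log\log a - \log\log\log a$, so
\[
M_2(a)\le 2+\frac{\log a}{W\!\left(\tfrac12\log a\right)}\cdot 2 \cdot \tfrac12 = O\!\left(\frac{\log a}{\log\log a-\log\log\log a}\right),
\]
which is the claimed rate. The care needed is entirely in the sharp growth estimate for the central Stirling numbers $\sts{2m}{m}$; once that is in hand, the combinatorial counting and the $W$-function inversion are routine.
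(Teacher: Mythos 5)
Your counting skeleton is exactly the paper's: part (1) of the Lemma gives at most one solution of $\sts{i+j}{j}=a$ for each fixed value of either index, part (2) yields a threshold $b$ (your $m+1$) with $\sts{2b}{b}>a$, any solution must then have one of the two indices below $b$, hence $M_2(a)\le 2b$; and the inversion $u\log u=t\Leftrightarrow u=t/W(t)$ is likewise the paper's mechanism. The genuine gap is at the step you yourself call the heart of the argument: you never establish a lower bound for the central Stirling numbers, and the quantitative form you posit is false. You claim that $\sts{2m}{m}\le a$ forces $m\log m\lesssim\tfrac12\log a$. It does not: since $m!\,\sts{2m}{m}$ counts surjections from a $2m$-set onto an $m$-set, we have $\sts{2m}{m}\le m^{2m}/m!\le(em)^m$, so taking $a=\sts{2m}{m}$ itself gives $\log a\le m\log m+m<2m\log m$, i.e. $m\log m>\tfrac12\log a$ already for $m\ge 3$. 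Consequently your claimed threshold bound $m\le\frac{\log a}{2W\left(\frac12\log a\right)}$ is false for such $a$ (it is off by essentially a factor of $2$), and the derivation collapses at exactly the point where all the quantitative content lives.

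The paper fills this gap with the associated Stirling numbers: by the identity \eqref{st2ast}, $\sts{2b}{b}\ge\sts{2b}{b}_{\ge2}=\frac{1}{2^b}\frac{(2b)!}{b!}$, and since $(2b)!/b!$ is a product of $b$ factors each at least $b$, this is at least $\left(\frac{b}{2}\right)^{b}$. Applied with $b-1$, this gives $\log a\ge(b-1)\log\frac{b-1}{2}$; substituting $u=\frac{b-1}{2}$ turns this into $u\log u\le\tfrac12\log a$ --- \emph{this substitution}, not the growth rate of $\log\sts{2m}{m}$, is where the $\tfrac12$ inside $W$ comes from --- whence $u\le\frac{\frac12\log a}{W\left(\frac12\log a\right)}$ and
\[
b-1=2u\le\frac{\log a}{W\left(\frac12\log a\right)},\qquad M_2(a)\le 2b\le 2+2\,\frac{\log a}{W\left(\frac12\log a\right)}.
\]
Note that this bound on the threshold is twice yours, and it is precisely what produces the constant $2$ in the theorem. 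Your concluding display, $M_2(a)\le 2+\frac{\log a}{W\left(\frac12\log a\right)}$, is therefore not ``exactly the stated bound'' but a strictly stronger claim that no available estimate supports. The asymptotic second assertion of the theorem is insensitive to these constants (your use of \eqref{Wasymp} there is fine), but the explicit first assertion is calibrated exactly to the associated-Stirling-number estimate $\sts{2b}{b}_{\ge2}=\frac{(2b)!}{2^b\,b!}\ge\left(\frac b2\right)^b$, which is the ingredient your proposal is missing.
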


\begin{proof}Let us fix an $a>1$. By the second point of the above lemma, there exists a unique smallest integer $b$ such that
\[\sts{2b}{b}>a\]
still holds. If we attempt to solve the equation
\begin{equation}
\sts{i+j}{j}=a,\label{st2eq}
\end{equation}
we see by the first point of the lemma that it can have at most one solution in $i$ for all fixed $j$ and vice versa.

Because of the definition of $b$, it cannot hold true that both $i$ and $j$ are greater than equal to $b$. That is, we must have that for our potential solutions $i<b$ or $j<b$. It therefore follows that \eqref{st2eq} has at most $2b$ solutions: $M_2(a)\le2b$.

If we can find an upper bound for $b$ in terms of $a$, we are ready. To this end, we consider the following expression for the Stirling numbers (see \cite[p. 324]{Charal}, or \cite[(10.2)]{Mezo_stbook}):
\begin{equation}
\sts{n}{n-k}=\sum_{j=0}^k\sts{k+j}{j}_{\ge2}\binom{n}{k+j}\quad(n\ge k+1).\label{st2ast}
\end{equation}
Here $\sts{n}{k}_{\ge2}$ are the associated Stirling numbers, counting partitions of an $n$ element set into $k$ non-empty set, all of cardinality $\ge2$. In particular\footnote{That $\sts{2n}{n}\ge\sts{2n}{n}_{\ge2}$ is obvious by the combinatorial definition of these numbers.},
\[\sts{2b}{b}=\sum_{j=0}^b\sts{b+j}{j}_{\ge2}\binom{2b}{b+j}\ge\sts{2b}{b}_{\ge2}.\]
It is known (and also easy to see), that $\sts{2b}{b}_{\ge2}=\frac{1}{2^b}\frac{(2b)!}{b!}$ (for a more general fact, see p. 286 in \cite{Mezo_stbook}), whence
\[a\ge\sts{2(b-1)}{b-1}\ge\sts{2(b-1)}{b-1}_{\ge2}\ge\left(\frac{b-1}{2}\right)^{b-1}.\]
The definition of the Lambert $W$ function yields that the equation $a=\left(\frac{b-1}{2}\right)^{b-1}$ is solvable in terms of $W$. By monotonicity,
\[b\le1+\frac{\log a}{W\left(\frac12\log a\right)},\]
therefore
\[M_2(a)\le2b\le2+2\frac{\log a}{W\left(\frac12\log a\right)}.\]
This is our first statement. The second one follows by recalling the asymptotics \eqref{Wasymp}.
\end{proof}

\subsection{Bound for $M_1(a)$}

Basically everything can be repeated in the first kind case what we have done in the second kind case. The corresponding version of Lemma 1 holds as well as the first kind version of \eqref{st2ast}. Latter reads as
\begin{equation}
\stf{n}{n-k}=\sum_{j=0}^k\stf{k+j}{j}_{\ge2}\binom{n}{k+j}\quad(n\ge k+1).\label{st1ast}
\end{equation}
Here $\stf{n}{k}_{\ge2}$ is the number of permutations on $n$ letters with $k$ cycles, and having no fixed points.
So
\[\stf{2n}{n}\ge\stf{2n}{n}_{\ge2}.\]
It is clear that $\stf{2n}{n}_{\ge2}=\sts{2n}{n}_{\ge2}$, so our method to estimate $b$ and $M_1(a)$ can be repeated verbatim. We get the following statement.

\begin{theorem}For all $a\ge2$
\[M_1(a)\le2+2\frac{\log a}{W\left(\frac12\log a\right)}.\]
If $a\to\infty$,
\[M_1(a)=O\left(\frac{\log a}{\log\log a-\log\log\log a}\right).\]
\end{theorem}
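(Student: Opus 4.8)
The plan is to mirror the proof of Theorem 1 verbatim, exploiting the structural parallel between the two kinds of Stirling numbers. First I would establish the first-kind analogue of Lemma 1: the sequence $\stf{i+j}{i}$ is monotone increasing in both $i$ and $j$, and the central sequence $\stf{2n}{n}$ is strictly increasing. These follow from the first-kind recurrence $\stf{n}{k}=\stf{n-1}{k-1}+(n-1)\stf{n-1}{k}$ by exactly the same telescoping argument used in the excerpt.

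Next, fixing $a>1$, I would use the strict monotonicity of $\stf{2n}{n}$ to define a unique smallest integer $b$ with $\stf{2b}{b}>a$. As in the second-kind case, the monotonicity in each variable shows that $\stf{i+j}{j}=a$ has at most one solution in $i$ for each fixed $j$ (and symmetrically), and the defining property of $b$ forces $i<b$ or $j<b$ for any solution, yielding $M_1(a)\le 2b$. The crux is then bounding $b$ from above in terms of $a$.

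For that bound I would invoke the identity \eqref{st1ast}, which gives
\[
\stf{2b}{b}=\sum_{j=0}^b\stf{b+j}{j}_{\ge2}\binom{2b}{b+j}\ge\stf{2b}{b}_{\ge2}.
\]
The key observation, already flagged in the excerpt, is that $\stf{2n}{n}_{\ge2}=\sts{2n}{n}_{\ge2}=\frac{1}{2^b}\frac{(2b)!}{b!}$, since the fixed-point-free permutations counted by $\stf{2b}{b}_{\ge2}$ coincide in number with the restricted partitions: in each cycle of length $\ge2$ there is a natural correspondence making the two associated quantities equal. This gives the same lower bound $a\ge\left(\frac{b-1}{2}\right)^{b-1}$, and solving via the Lambert $W$ function exactly as before yields $b\le 1+\frac{\log a}{W\left(\frac12\log a\right)}$, hence the stated bound on $M_1(a)$. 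The asymptotic statement then follows from \eqref{Wasymp}.

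The main obstacle, such as it is, is verifying that the combinatorial identification $\stf{2n}{n}_{\ge2}=\sts{2n}{n}_{\ge2}$ truly holds and that the inequality $\stf{2b}{b}\ge\stf{2b}{b}_{\ge2}$ is justified by the combinatorial definition; once these two facts are in hand, every remaining step is identical to the proof of Theorem 1 and requires no new analysis. I would therefore spend the bulk of the argument confirming these equalities and then simply refer back to the computation already carried out for $M_2(a)$.
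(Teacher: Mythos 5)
Your proposal is correct and takes essentially the same route as the paper, which likewise reduces the first-kind case to the second-kind argument via identity \eqref{st1ast}, the inequality $\stf{2b}{b}\ge\stf{2b}{b}_{\ge2}$, and the equality $\stf{2n}{n}_{\ge2}=\sts{2n}{n}_{\ge2}$. One small sharpening: that equality holds not through a general cycle-to-block correspondence (which fails for cycles of length $\ge3$), but because with $2n$ letters split into $n$ cycles (resp.\ blocks) all of size $\ge2$, every cycle (block) must have size exactly $2$, so both quantities count perfect matchings and equal $\frac{1}{2^n}\frac{(2n)!}{n!}$.
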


\section{Numerical tables}

We present all the positive integers $a$ for which $M_i(a)\neq0$, up to $a=100\,000$ ($i=1,2$). The calculations were done by Mathematica v10.1. One can highly facilitate the computations by making use the log-concavity property of the Stirling numbers. This observation shows, that the extremal Stirling numbers (in the second kind case $\sts{n}{2}$ and $\sts{n}{n-1}$) are the smallest ones in a given row (except, of course, the excluded $\sts{n}{1}$ and $\sts{n}{n}$ cases). In addition, we have that $\sts{n}{2}\ge\sts{n}{n-1}=\binom{n}{2}$. Hence, we do not need to check the $n$th row and those rows for which $\binom{n}{2}>a$. We therefore can restrict our search up to the rows where
\[n\le\frac12\left(1+\sqrt{1+8a}\right),\]
and similarly in the first kind case. (This also results the otherwise obvious fact that every number $a$ can occur only finitely many times in a Stirling triangle.)

\vspace{3mm}

\begin{center}
\begin{tabular}{|c|c|c|}
\hline 
Interval&number of $a$ with $M_2(a)>0$ & number of $a$ with $M_1(a)>0$\\
\hline \hline
[0,9999] &176 &169\\ \hline
[10000,19999] &64 &63\\ \hline
[20000,29999] &50 &48\\ \hline
[30000,39999] &42 &41\\ \hline
[40000,49999] &36 &35\\ \hline
[50000,59999] &31 &33\\ \hline
[60000,69999] &32 &31\\ \hline
[70000,79999] &27 &27\\ \hline
[80000,89999] &26 &24\\ \hline
[90000,100000] &24 &25\\ \hline
\end{tabular}
\end{center}

\vspace{3mm}

\begin{center}

Table 1. The number of $a$ with $M_i(a)>0, i=1,2$, where $a$ in the corresponding interval

\end{center}

\section{Further observations and related diophantine equations: results and conjectures}

\subsection{The second kind case}

It might be well true that $M_2(a)$ is a bounded function. We have checked its values up to $a=100\,000$ and we have found that in this range $M_2(a)\le2$ ($a\ge2$). We gave the full table at the end of the paper for those $a$s for which $M_2(a)\neq0$. Here we make some comments on the only interesting cases that we have found, i.e., when $M_2(a)=2$. These are
\[a\in\{15,4\,095,66\,066\}.\]
More precisely,
\[\sts{5}{2}=\sts{6}{5}=15,\]
\[\sts{13}{2}=\sts{91}{90}=4\,095,\]
and
\[\sts{14}{11}=\sts{364}{363}=66\,066.\]
We notice that 15 and $4\,095$ are the so-called Ramanujan-Nagell numbers, solving the diophantine equation
\[2^n-1=\frac{m(m-1)}{2},\]
i.e., the Ramanujan-Nagell numbers are triangular numbers of the form $2^n-1$. There are only five such numbers, $0,1,3,15,4\,095$. Notice, that the above listed Stirling numbers are of the form $\sts{n}{2}$ and $\sts{n}{n-1}$. It is well known that
\[\sts{n}{2}=2^{n-1}-1,\quad\sts{n}{n-1}=\binom{n}{2},\]
so the appearance of the Ramanujan-Nagell numbers is not a surprise.

What about $a=66\,066$? It arises in $\sts{n}{n-3}=\sts{m}{m-1}$ with
\[(n,m)=(14,364).\]
The $\sts{n}{n-3}$ special Stirling numbers have a closed form \cite[p. 392]{Mezo_stbook}:
\[\sts{n}{n-3}=\binom{n}{4}+10\binom{n}{5}+15\binom{n}{6}.\]
(This is also a consequence of \eqref{st2ast}). One might therefore ask, whether there are other pairs $(n,m)$ such that
\begin{equation}
\binom{n}{4}+10\binom{n}{5}+15\binom{n}{6}=\binom{m}{2}.\label{diof1}
\end{equation}

Using a simple MAPLE program we find that the unique solution of \eqref{diof1} with $6\leq n\leq 4.6\cdot 10^7$ is $(n,m)=(14,364)$. Based on our numerical investigations we give the following

\begin{conjec}
The unique solution of \eqref{diof1} with $n\geq 6 $  is $(n,m)=(14,364)$. 
\end{conjec}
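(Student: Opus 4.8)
The plan is to convert \eqref{diof1} into a single curve equation and then apply the standard toolkit for points on a genus-$2$ curve. The first step is to simplify the left-hand side. A direct computation (factoring out $15\,n(n-1)(n-2)(n-3)$) shows that
\[720\left(\binom{n}{4}+10\binom{n}{5}+15\binom{n}{6}\right)=15\,n(n-1)(n-2)^2(n-3)^2,\]
so \eqref{diof1} is equivalent to the much cleaner equation
\[n(n-1)(n-2)^2(n-3)^2=24\,m(m-1).\]
Completing the square on the right via $24\,m(m-1)=6(2m-1)^2-6$ and setting $Z=6(2m-1)$, this becomes
\[C:\qquad Z^2=6\,n(n-1)(n-2)^2(n-3)^2+36.\]
One checks that the sextic on the right is squarefree (compute its discriminant), so $C$ is a hyperelliptic curve of genus $2$; by Faltings' theorem it has only finitely many rational points. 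The known solution $(n,m)=(14,364)$ gives the point $(n,Z)=(14,4362)$ on $C$, alongside the small integral points coming from $n\in\{0,1,2,3\}$ (where $Z=\pm6$), $n=4$ (where $Z=\pm18$, $m=2$) and $n=5$ (where $Z=\pm66$, $m=6$). The content of the conjecture is exactly that, once we impose $n\ge 6$, the value $n=14$ is the only survivor.

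To make the finiteness effective I would compute the Mordell--Weil group of the Jacobian $J=\operatorname{Jac}(C)$ over $\mathbb{Q}$. A $2$-descent (for instance with the hyperelliptic machinery in Magma) should produce the rank $r=\operatorname{rank}J(\mathbb{Q})$ together with explicit generators. If $r\le 1$, the Chabauty--Coleman method applies: choosing a prime $p$ of good reduction and integrating the annihilating differential yields an explicit finite set of $p$-adic points containing $C(\mathbb{Q})$, and combining this with the Mordell--Weil sieve should pin down $C(\mathbb{Q})$ exactly. From that list one reads off the integral points, and I expect only the few points listed above to survive, leaving $(14,364)$ as the sole solution with $n\ge 6$.

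The main obstacle is the possibility that $r=2$, that is, that the rank equals the genus, in which case classical Chabauty--Coleman provides no bound. In that event I would fall back on the method for integral points on hyperelliptic curves of Bugeaud, Mignotte, Siksek, Stoll and Tengely: an explicit (but astronomically large) bound for $|n|$ coming from Baker's theory of linear forms in logarithms, followed by an LLL-based reduction of that bound and a final Mordell--Weil sieve over a suitable set of primes. A secondary difficulty is that the $2$-descent itself may be inconclusive if the $2$-part of the Tate--Shafarevich group of $J$ is nontrivial, which would force a higher descent to certify the rank and the generators. These computational hurdles---rather than any conceptual gap---are precisely why the statement is presently only a conjecture, supported by the search up to $n\le 4.6\cdot 10^7$.
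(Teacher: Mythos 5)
The statement you were asked to prove is, in the paper, a \emph{conjecture}: the authors give no proof at all, only a numerical verification that $(n,m)=(14,364)$ is the unique solution of \eqref{diof1} with $6\le n\le 4.6\cdot 10^7$, together with the explicit admission that ``the solution to the above conjecture is out of reach of current methods.'' So there is no proof in the paper to compare yours against, and your proposal does not close that gap either. Your algebraic reduction is correct and is worth recording: indeed
\[
720\left(\binom{n}{4}+10\binom{n}{5}+15\binom{n}{6}\right)=15\,n(n-1)(n-2)^2(n-3)^2,
\]
so \eqref{diof1} is equivalent to $n(n-1)(n-2)^2(n-3)^2=24\,m(m-1)$, and with $Z=6(2m-1)$ one obtains the genus-$2$ model $Z^2=6\,n(n-1)(n-2)^2(n-3)^2+36$; the sextic is squarefree (its derivative factors as $6(n-2)(n-3)\,c(n)$ with $c(n)=6n^3-25n^2+27n-6$, and $c$ shares no root with the sextic), and your list of known points at $n\in\{0,1,2,3,4,5,14\}$ checks out. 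Faltings then gives finiteness of rational points, and Baker's method even gives an effective bound for the integral ones; none of this, however, decides the conjecture.

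The gap is that everything after the reduction is hypothetical: you do not compute the rank of $\operatorname{Jac}(C)(\mathbb{Q})$, you exhibit no generators, and you do not actually run Chabauty--Coleman, the Mordell--Weil sieve, or the Baker-plus-LLL machinery --- the words ``should'' and ``I expect'' carry the entire burden of the argument. This is not a removable technicality: both Chabauty and the Bugeaud--Mignotte--Siksek--Stoll--Tengely method for integral points require a certified Mordell--Weil basis (or at least a certified rank together with generators of a subgroup of known finite index), and a $2$-descent can be blocked by a nontrivial $2$-part of the Tate--Shafarevich group, exactly as you note yourself. It is telling that Tengely, an author of this paper, is also a co-author of the very method you propose as a fallback, and the paper nevertheless declares the problem out of reach; the obstruction you dismiss as a ``secondary difficulty'' is in all likelihood the actual state of affairs. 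In short, your write-up is a sensible research plan, and the explicit genus-$2$ model is more than the paper itself records, but it is a plan rather than a proof --- a status your own closing paragraph concedes, and which coincides with the paper's reason for leaving the statement as a conjecture.
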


Unfortunately, the solution to the above conjecture is out of reach of current methods. 

\subsection{The first kind case}

In the first kind Stirling triangle there is no abundance of values appearing more than once. In fact, up to $a=100000$, there are only three numbers:
\[a\in\{1,6,120\}.\]
All of them appear twice (so $M_1(a)\le2$ for $1<a\le100\,000$). They are factorial numbers and triangular numbers at the same time: 1 appears infinitely many often on the extremes, and the other two numbers appear at the following positions:
\[\stf{4}{1}=\stf{4}{3}=6,\]
\[\stf{6}{1}=\stf{16}{15}=120.\]
(Remember, that $\stf{n}{1}=(n-1)!$ while $\stf{n}{n-1}=\binom{n}{2}$.)

The question naturally arises: are there other factorial-triangular numbers? In other words, are there solutions to the diophantine equation
\begin{equation}
n!=\frac{m(m-1)}{2}\label{diof2}
\end{equation}
other than $(n,m)=\{(1,2),(3,4),(5,16)\}$?

Equation \eqref{diof2}, or equivalently 
\begin{equation}
8n!+1=(2m-1)^2 \label{equiv}
\end{equation}
 is a member of the family of polynomial-factorial equations. The most famous problem to this direction is the unsolved Brocard-Ramanujan equation 
$$n!+1=m^2,$$ 
with three known solutions $(m,n)=(4,5), (5,11)$ and $(7,71)$. Before using the approach of \cite{Overholt} and \cite{berndt} to obtain our theorem we recall the famous $abc$-conjecture that is one of the most exciting open problems in diophantine number theory. 

\begin*{\bf ABC-Conjecture.}
{\it  For any $\epsilon > 0$ there exists a constant $C(\epsilon)$
depending only on $\epsilon$ such that whenever $A, B$ and $C$ are three coprime and
non-zero integers with $A + B = C$, then
$$\max(|A|, |B|, |C|) < C(\epsilon)Rad(ABC)^{1+\epsilon},$$
where $Rad(ABC)$ denotes the radical of $ABC$, i. e. $Rad(ABC)=\prod_{p|ABC}p$.}
\end*{}

Let $k\geq 3$ be an integer and denote by 
$$P_k(x)=\frac{x((k-2)x-k+4)}{2}$$
the $x$th $k$-gonal number. For $k=3$ and $4$, we have $P_k(x)=\binom{x+1}{2}$ and $x^2$, respectively. For fixed $k\neq 4$, we consider the equation
$n!=P_k(x)$, or equivalently
\begin{equation}
8(k-2)n!+(k-4)^2=(2(k-2)x+(4-k))^2 \label{main}
\end{equation}
in positive integers $n>1$ and $x>1$.

\begin{theorem} For fixed $k\neq 4$, the ABC-Conjecture implies \eqref{main} has only finitely many in positive integers  $n>1$ and $y>1$. Further, all the solutions $(k,n,x)$ with $3\leq k\leq 50, 1<n\leq 10^5$  and $1<x$ are $(k,n,x)=(3,3,3),(3,5,15),(6,3,2),(6,5,8),(9,4,3), (24,4,2)$ and $(41,5,3)$.

\end{theorem}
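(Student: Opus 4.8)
The plan is to establish the finiteness statement first via the $abc$-conjecture, then handle the explicit enumeration computationally. For the finiteness part, I would follow the Overholt--Berndt strategy applied to equation \eqref{main}. Fix $k\neq4$ and write $D=k-4$, so that \eqref{main} reads $8(k-2)n!+D^2=Y^2$ with $Y=2(k-2)x+(4-k)$. First I would set $A=D^2$, $B=8(k-2)n!$, and $C=Y^2$, so that $A+B=C$. The idea is to apply the $abc$-conjecture to a coprime version of this triple. The key arithmetic observation is that the radical of $B=8(k-2)\,n!$ is small: since $n!$ is divisible by every prime up to $n$ but contributes each such prime only once to the radical, we have $\mathrm{Rad}(n!)=\prod_{p\le n}p=e^{\theta(n)}$, which by the prime number theorem is $e^{(1+o(1))n}$, whereas $n!$ itself grows like $e^{n\log n}$. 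Thus $\mathrm{Rad}(ABC)$ is bounded by a quantity of size roughly $e^{(1+o(1))n}\cdot |Y|\cdot$ (constants in $k$), while $C=Y^2$ forces the magnitude.

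Next I would make the comparison precise. After dividing out the gcd of $A,B,C$ to obtain a coprime triple (the gcd is bounded in terms of $k$ alone, since $\gcd(A,C)\mid$ a fixed power related to $D$), the $abc$-conjecture gives
\[
Y^2 \le \max(|A|,|B|,|C|) < C(\epsilon)\,\mathrm{Rad}(ABC)^{1+\epsilon}.
\]
Bounding $\mathrm{Rad}(ABC)\le \mathrm{Rad}(D^2)\cdot\mathrm{Rad}(8(k-2))\cdot\mathrm{Rad}(n!)\cdot\mathrm{Rad}(Y^2)$ and using $\mathrm{Rad}(Y^2)\le |Y|$ together with $\mathrm{Rad}(n!)\le 4^n$, one gets an inequality of the shape
\[
Y^2 \le C'(k,\epsilon)\,(4^n)^{1+\epsilon}\,|Y|^{1+\epsilon}.
\]
Meanwhile $Y^2 \asymp (k-2)\,n!$ grows like $n!$, which dominates any exponential $4^{n(1+\epsilon)}$. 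Comparing the two growth rates shows that $n$ must be bounded by an effective constant depending on $k$ and $\epsilon$, hence only finitely many $n$ (and correspondingly $x$) can satisfy \eqref{main}. Choosing $\epsilon$ small enough that $n\log n$ beats $(1+\epsilon)n\log 4$ is the heart of the estimate.

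For the second assertion, the explicit list of solutions, I would proceed by direct computation over the stated finite range. For each $k$ with $3\le k\le 50$ and each $n$ with $1<n\le 10^5$, the equation $n!=P_k(x)$ is quadratic in $x$, so I would test whether $8(k-2)n!+(k-4)^2$ is a perfect square and whether the resulting $x$ is a positive integer exceeding $1$. This is a finite search that a computer algebra system handles readily, and it yields exactly the seven tuples listed.

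The main obstacle is the finiteness argument, specifically the bookkeeping needed to pass from the raw triple $(A,B,C)$ to a genuinely coprime triple without losing control of the constants: one must verify that the common factors among $D^2$, $8(k-2)n!$, and $Y^2$ are bounded in terms of $k$ alone (and do not grow with $n$), so that dividing them out leaves the radical estimate intact. Once the coprimality reduction is clean, the growth comparison between $n!$ and $4^{(1+\epsilon)n}$ is routine. I expect the $k=4$ exclusion to be essential here, since for $k=4$ the polynomial $P_k(x)=x^2$ degenerates and the equation $n!=x^2$ is precisely Brocard--Ramanujan, which requires a genuinely different (and still open) treatment.
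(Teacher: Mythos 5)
Your finiteness argument is essentially the paper's own: the authors also apply the ABC conjecture to the triple $(k-4)^2+8(k-2)n!=\bigl(2(k-2)x+(4-k)\bigr)^2$, bound the radical using the Erd\H{o}s-type estimate $\prod_{p\le n}p\le 4^n$ together with $\mathrm{Rad}(Y^2)\le|Y|\le(8n\cdot n!+(n-2)^2)^{1/2}$, and conclude that $n!$ cannot be dominated by a power of roughly $(n!)^{1/2+\epsilon}$ times $4^{n(1+\epsilon)}$. (You are in fact a bit more careful than the paper: you address the coprimality reduction explicitly, noting the common factor divides $(k-4)^2$ and is bounded in $k$, a point the paper passes over silently; their assumption $n\ge k-2$ plays the role of absorbing the constants' primes into $\prod_{p\le n}p$.) Where you genuinely diverge is the enumeration part. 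You propose testing directly whether $8(k-2)n!+(k-4)^2$ is a perfect square for all $3\le k\le 50$, $n\le 10^5$; the paper instead follows the local argument of Berndt and Galway: it takes the set $P$ of the first $19$ primes above $10^5$ and discards a pair $(k,n)$ as soon as the Legendre symbol $\left(\frac{8(k-2)n!+(k-4)^2}{p}\right)$ equals $-1$ for some $p\in P$, which only requires computing $n!$ modulo a handful of primes; the few dozen survivors are then finished off by small-$n$ inspection and a few extra primes. The reason this matters is a subtlety your ``handled readily'' glosses over: for any modulus $m\le n$ one has $n!\equiv 0\pmod m$, so the candidate is congruent to the square $(k-4)^2$ modulo every small number, and the cheap small-modulus filters that normally make perfect-square searches fast are completely useless here. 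One is therefore forced either to use primes exceeding $10^5$ (the paper's sieve) or to run millions of full integer square-root extractions on numbers with roughly half a million digits. Your brute-force route is mathematically valid --- it is a finite verification and would in principle produce the same seven triples --- but it is vastly more expensive, and recognizing that the residue sieve modulo primes just beyond the search range is what makes the computation practical is precisely the content of the paper's second part.
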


If $k=4$ we have $n!=x^2$, and Bertrand-postulate gives that it has a unique solution $(n,x)=(1,1 )$.

\begin{proof}

We may suppose that $n\geq k-2$, and by Erd\H{o}s Lemma on the product of all primes less than or equal to $n$ we get 
$$ Rad(8(k-2)\cdot n!\cdot (k-4)^2\cdot (2(k-2)x+(4-k))^2)\leq 4^n\cdot Rad(2(k-2)x+(4-k))\leq 4^n\cdot\left( 2(k-2)x+(4-k)\right),$$
and from \eqref{main}
$$2(k-2)x+(4-k))\leq (8n\cdot n!+(n-2)^2)^{\frac{1}{2}}.$$
Assuming $ABC$-conjecture these inequalities yield 
$$
n!<C(\epsilon)\cdot  (8n\cdot n!+(n-2)^2)^{\frac{1}{2}+\epsilon}, 
$$
which is impossible for large $n$.

For the proof of the second part of our theorem we follow the local argument of \cite{berndt}. Let $P$ be the set of first $19$ primes over $10^5$, that is $P=\{100019,100043,100049,\ldots ,100267,100271\}$. If we can give a prime $p\in P$ such that the Legendre symbol 
$$\left(\frac{8(k-2)n!+(k-4)^2}{p}\right)=-1$$
then \eqref{main} is impossible in rational integers. A short MAPLE program was implemented for a usual PC, the running time is about 30-50 minutes for every $k$. This sieve is pretty efficient, the survivor pairs $(k,n)$ under the conditions of the theorem are $(k,n)=(5,2); (k,3), k\in\{3,6,9,14,23,42\}; (k,4), k\in\{9,24,27\}; (k,5),k\in \{3,6,8,30,32,41\}; (41,8)$ and $(5, 54545), (12,93137), (17,12797), (28,78842)$,
$ (33,53361), (35,92666)$, and $(38,11846)$.  For $n\leq 8$, a direct calculation gives the solutions $(k,n,x)= (3,3,3), (3,5,15), (6,3,2), 
(6,5,8), (9,4,3), (24,4,2)$ and $(41,5,3)$. One can check by MAPLE that
$$\left(\frac{8\cdot 3\cdot 54545!+1}{100279}\right)=-1,
\left(\frac{8\cdot 10 \cdot 93137!+64}{100279}\right)=-1,
\left(\frac{8\cdot 15\cdot 12797!+169}{100279}\right)=-1,$$
$$\left(\frac{8\cdot 26\cdot 78842!+24^2}{100297}\right)=-1,
\left(\frac{8\cdot 31\cdot53361!+29^2}{100279}\right)=-1,
\left(\frac{8\cdot 33\cdot 92666!+31^2}{100313}\right)=-1,$$
$$\left(\frac{8\cdot 36\cdot 11846!+34^2}{100291}\right)=-1,$$
thus Theorem 3 is proved.
\end{proof}
Considering these results one can state
\begin{conjec}
All the solutions of \eqref{diof2} are $(n,m)=(1,2),(3,4),$ and $(5,16)$.
\end{conjec}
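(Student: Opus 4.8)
The plan is to read \eqref{diof2} as the $k=3$ instance of \eqref{main}: with $x=m-1$ we have $P_3(x)=\binom{x+1}{2}=\binom{m}{2}=\tfrac{m(m-1)}{2}$, so \eqref{diof2} is equivalent to \eqref{equiv}, that is $8\,n!+1=(2m-1)^2$. This is convenient because the second part of Theorem 3 already settles the range $1<n\le10^5$ for $k=3$, producing exactly $(n,m)=(3,4)$ and $(5,16)$; together with the trivial case $n=1$ this accounts for all three asserted solutions. Hence the entire content of the conjecture reduces to a single claim: \eqref{equiv} has no solution with $n>10^5$.

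I would attack that claim in the same two phases used for Theorem 3. The elimination phase is the easy one: for any prime $p>n$ the factorial is invertible modulo $p$, and if one can produce such a $p$ with
\[\left(\frac{8\,n!+1}{p}\right)=-1,\]
then \eqref{equiv} is insoluble for that $n$. Heuristically such a $p$ exists and is found quickly for every $n\notin\{1,3,5\}$, so a local sieve of exactly the kind implemented for Theorem 3 would clear any explicitly bounded range $10^5<n\le N_0$. The crucial feature is that the test genuinely needs $p>n$: as soon as $p\le n$ one has $8\,n!+1\equiv1\pmod p$, which is always a square, so no finite set of primes can eliminate arbitrarily large $n$ and the sieve by itself can never close the argument.

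Everything therefore rests on the bounding phase, and this is exactly where I expect the real obstacle to lie. Applying the ABC-conditional estimate from the proof of Theorem 3 with $k=3$ already yields finiteness of the solution set, but the constant $C(\epsilon)$ there is ineffective and produces no usable $N_0$ below which to run the sieve. Unconditionally I know of no mechanism that bounds $n$. One tempting reformulation is to write $2\,n!=m(m-1)$ with $\gcd(m-1,m)=1$, which exhibits $m$ and $m-1$ as a pair of consecutive $n$-smooth integers whose product is exactly $2\,n!$; but St\o rmer's theorem on consecutive smooth numbers requires a \emph{fixed} smoothness bound, whereas here the bound $n$ grows with the unknown, so it yields nothing. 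In effect the problem sits at the same level of difficulty as the Brocard--Ramanujan equation $n!+1=m^2$, whose finiteness is likewise only known under ABC. A complete unconditional proof thus awaits either an effective ABC-type inequality for \eqref{main} or a genuinely new idea that converts the local and smoothness constraints into an explicit upper bound for $n$ — which is precisely why the statement can at present be offered only as a conjecture.
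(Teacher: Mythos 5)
Your assessment is correct and takes essentially the same route as the paper: the statement is offered only as a conjecture, and the paper's sole supporting evidence is exactly what you assemble, namely the $k=3$ instance of Theorem 3 (the reduction to \eqref{equiv}, the Legendre-symbol sieve settling $1<n\le10^5$, and ABC-conditional finiteness via \eqref{main} with an ineffective constant), with the trivial $n=1$ case handled separately since the theorem assumes $n,x>1$. Your further remarks — that the sieve structurally needs primes $p>n$ and so cannot close the argument alone, and that St\o rmer-type results fail because the smoothness bound grows with $n$ — correctly explain why the paper, too, leaves the statement as a conjecture rather than a theorem.
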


\section{Appendix}

Here we give the full table for those values of $a$s for which $M_i(a)\neq0, i=1,2$. The duplicated values are not deleted in the below tables.

\vspace{5mm}
The second kind case.

$\mathbf{a\in[0,9\,999]}$:

0, 1, 3, 6, 7, 10, 15, 15, 21, 25, 28, 31, 36, 45, 55, 63, 65, 66, 78, 90, 91, 105, 120, 127, 136, 140, 153, 171, 190, 210, 231, 253, 255, 266, 276, 300, 301, 325, 350, 351, 378, 406, 435, 462, 465, 496, 
511, 528, 561, 595, 630, 666, 703, 741, 750, 780, 820, 861, 903, 946, 
966, 990,

1023, 1035, 1050, 1081, 1128, 1155, 1176, 1225, 1275, 1326, 1378, 1431, 1485, 1540, 1596, 1653, 1701, 1705, 1711, 1770, 1830, 1891, 1953,

2016, 2047, 2080, 2145, 2211, 2278, 2346, 2415, 2431, 2485, 2556, 2628, 2646, 2701, 2775, 2850, 2926,

3003, 3025, 3081, 3160, 3240, 3321, 3367, 3403, 3486, 3570, 3655, 3741, 3828, 3916,

4005, 4095, 4095, 4186, 4278, 4371, 4465, 4550, 4560, 4656, 4753, 4851, 4950,

5050, 5151, 5253, 5356, 5460, 5565, 5671, 5778, 5880, 5886, 5995,

6020, 6105, 6216, 6328, 6441, 6555, 6670, 6786, 6903, 6951,

7021, 7140, 7260, 7381, 7503, 7626, 7750, 7770, 7820, 7875,

8001, 8128, 8191, 8256, 8385, 8515, 8646, 8778, 8911,

9045, 9180, 9316, 9330, 9453, 9591, 9730, 9870, 9996

\vspace{5mm}
$\mathbf{a\in[10\,000,19\,999]}$:

10011, 10153, 10296, 10440, 10585, 10731, 10878, 11026, 11175, 11325, \
11476, 11628, 11781, 11880, 11935, 12090, 12246, 12403, 12561, 12597, \
12720, 12880, 13041, 13203, 13366, 13530, 13695, 13861, 14028, 14196, \
14365, 14535, 14706, 14878, 15051, 15225, 15400, 15576, 15675, 15753, \
15931, 16110, 16290, 16383, 16471, 16653, 16836, 17020, 17205, 17391, \
17578, 17766, 17955, 18145, 18336, 18528, 18721, 18915, 19110, 19285, \
19306, 19503, 19701, 19900

\vspace{5mm}
$\mathbf{a\in[20\,000,29\,999]}$:

20100, 20301, 20503, 20706, 20910, 21115, 21321, 21528, 21736, 21945, \
22155, 22275, 22366, 22578, 22791, 22827, 23005, 23220, 23436, 23485, \
23653, 23871, 24090, 24310, 24531, 24753, 24976, 25200, 25425, 25651, \
25878, 26106, 26335, 26565, 26796, 27028, 27261, 27495, 27730, 27966, \
28203, 28336, 28441, 28501, 28680, 28920, 29161, 29403, 29646, 29890

\vspace{5mm}
$\mathbf{a\in[30\,000,39\,999]}$:

30135, 30381, 30628, 30876, 31125, 31375, 31626, 31878, 32131, 32385, \
32640, 32767, 32896, 33153, 33411, 33670, 33902, 33930, 34105, 34191, \
34453, 34716, 34980, 35245, 35511, 35778, 36046, 36315, 36585, 36856, \
37128, 37401, 37675, 37950, 38226, 38503, 38781, 39060, 39325, 39340, \
39621, 39903

\vspace{5mm}
$\mathbf{a\in[40\,000,49\,999]}$:

40186, 40250, 40470, 40755, 41041, 41328, 41616, 41905, 42195, 42486, \
42525, 42778, 43071, 43365, 43660, 43956, 44253, 44551, 44850, 45150, \
45451, 45753, 46056, 46360, 46665, 46971, 47278, 47450, 47586, 47895, \
48205, 48516, 48828, 49141, 49455, 49770

\vspace{5mm}
$\mathbf{a\in[50\,000,59\,999]}$:

50086, 50403, 50721, 51040, 51360, 51681, 52003, 52326, 52650, 52975, \
53301, 53628, 53956, 54285, 54615, 54946, 55278, 55575, 55611, 55945, \
56280, 56616, 56953, 57291, 57630, 57970, 58311, 58653, 58996, 59340, \
59685

\vspace{5mm}
$\mathbf{a\in[60\,000,69\,999]}$:

0031, 60378, 60726, 61075, 61425, 61776, 62128, 62481, 62835, 63190, \
63546, 63903, 63987, 64261, 64620, 64701, 64980, 65341, 65535, 65703, \
66066, 66066, 66430, 66795, 67161, 67528, 67896, 68265, 68635, 69006, \
69378, 69751

\vspace{5mm}
$\mathbf{a\in[70\,000,79\,999]}$:
70125, 70500, 70876, 71253, 71631, 72010, 72390, 72771, 73153, 73536, \
73920, 74305, 74691, 74907, 75078, 75466, 75855, 76245, 76636, 77028, \
77421, 77815, 78210, 78606, 79003, 79401, 79800

\vspace{5mm}
$\mathbf{a\in[80\,000,89\,999]}$:

80200, 80601, 81003, 81406, 81810, 82215, 82621, 83028, 83436, 83845, \
84255, 84666, 85078, 85491, 85905, 86275, 86320, 86526, 86736, 87153, \
87571, 87990, 88410, 88831, 89253, 89676

\vspace{5mm}
$\mathbf{a\in[90\,000,100\,000]}$:

90100, 90525, 90951, 91378, 91806, 92235, 92665, 93096, 93528, 93961, \
94395, 94830, 95266, 95703, 96141, 96580, 97020, 97461, 97903, 98346, \
98790, 98890, 99235, 99681

\vspace{5mm}
The first kind case.

$\mathbf{a\in[0,9\,999]}$:

0, 1, 2, 3, 6, 6, 10, 11, 15, 21, 24, 28, 35, 36, 45, 50, 55, 66, 78, 85, 91, \
105, 120, 120, 136, 153, 171, 175, 190, 210, 225, 231, 253, 274, 276, \
300, 322, 325, 351, 378, 406, 435, 465, 496, 528, 546, 561, 595, 630, \
666, 703, 720, 735, 741, 780, 820, 861, 870, 903, 946, 990,

1035, \
1081, 1128, 1176, 1225, 1275, 1320, 1326, 1378, 1431, 1485, 1540, \
1596, 1624, 1653, 1711, 1764, 1770, 1830, 1891, 1925, 1953, 1960,

2016, 2080, 2145, 2211, 2278, 2346, 2415, 2485, 2556, 2628, 2701, \
2717, 2775, 2850, 2926,

3003, 3081, 3160, 3240, 3321, 3403, 3486, 3570, 3655, 3731, 3741, 3828, 3916,

4005, 4095, 4186, 4278, 4371, 4465, 4536, 4560, 4656, 4753, 4851, 4950,

5005, 5040, 5050, 5151, 5253, 5356, 5460, 5565, 5671, 5778, 5886, 5995,

6105, 6216, 6328, 6441, 6555, 6580, 6670, 6769, 6786, 6903,

7021, 7140, 7260, 7381, 7503, 7626, 7750, 7875,

8001, 8128, 8256, 8385, 8500, 8515, 8646, 8778, 8911,

9045, 9180, 9316, 9450, 9453, 9591, 9730, 9870

\vspace{5mm}
$\mathbf{a\in[10\,000,19\,999]}$:
10011, 10153, 10296, 10440, 10585, 10731, 10812, 10878, 11026, 11175, \
11325, 11476, 11628, 11781, 11935, 12090, 12246, 12403, 12561, 12720, \
12880, 13041, 13068, 13132, 13203, 13366, 13530, 13566, 13695, 13861, \
14028, 14196, 14365, 14535, 14706, 14878, 15051, 15225, 15400, 15576, \
15753, 15931, 16110, 16290, 16471, 16653, 16815, 16836, 17020, 17205, \
17391, 17578, 17766, 17955, 18145, 18150, 18336, 18528, 18721, 18915, \
19110, 19306, 19503, 19701, 19900

\vspace{5mm}
$\mathbf{a\in[20\,000,29\,999]}$:

20100, 20301, 20503, 20615, 20706, 20910, 21115, 21321, 21528, 21736, \
21945, 22155, 22366, 22449, 22578, 22791, 23005, 23220, 23436, 23653, \
23871, 24090, 24310, 24531, 24753, 24976, 25025, 25200, 25425, 25651, \
25878, 26106, 26335, 26565, 26796, 27028, 27261, 27495, 27730, 27966, \
28203, 28441, 28680, 28920, 29161, 29403, 29646, 29890

\vspace{5mm}
$\mathbf{a\in[30\,000,39\,999]}$:

30107, 30135, 30381, 30628, 30876, 31125, 31375, 31626, 31878, 32131, \
32385, 32640, 32670, 32896, 33153, 33411, 33670, 33930, 34191, 34453, \
34716, 34980, 35245, 35511, 35778, 35926, 36046, 36315, 36585, 36856, \
37128, 37401, 37675, 37950, 38226, 38503, 38781, 39060, 39340, 39621, \
39903

\vspace{5mm}
$\mathbf{a\in[40\,000,49\,999]}$:

40186, 40320, 40470, 40755, 41041, 41328, 41616, 41905, 42195, 42486, \
42550, 42778, 43071, 43365, 43660, 43956, 44253, 44551, 44850, 45150, \
45451, 45753, 46056, 46360, 46665, 46971, 47278, 47586, 47895, 48205, \
48516, 48828, 49141, 49455, 49770

\vspace{5mm}
$\mathbf{a\in[50\,000,59\,999]}$:

50050, 50086, 50403, 50721, 51040, 51360, 51681, 52003, 52326, 52650, \
52975, 53301, 53628, 53956, 54285, 54615, 54946, 55278, 55611, 55770, \
55945, 56280, 56616, 56953, 57291, 57630, 57970, 58311, 58500, 58653, \
58996, 59340, 59685

\vspace{5mm}
$\mathbf{a\in[60\,000,69\,999]}$:

60031, 60378, 60726, 61075, 61425, 61776, 62128, 62481, 62835, 63190, \
63273, 63546, 63903, 64261, 64620, 64980, 65341, 65703, 66066, 66430, \
66795, 67161, 67284, 67528, 67896, 67977, 68265, 68635, 69006, 69378, \
69751

\vspace{5mm}
$\mathbf{a\in[70\,000,79\,999]}$:

70125, 70500, 70876, 71253, 71631, 72010, 72390, 72771, 73153, 73536, \
73920, 74305, 74691, 75078, 75466, 75855, 76245, 76636, 77028, 77421, \
77815, 78210, 78561, 78606, 79003, 79401, 79800

\vspace{5mm}
$\mathbf{a\in[80\,000,89\,999]}$:

80200, 80601, 81003, 81406, 81810, 82215, 82621, 83028, 83436, 83845, \
84255, 84666, 85078, 85491, 85905, 86320, 86736, 87153, 87571, 87990, \
88410, 88831, 89253, 89676

\vspace{5mm}
$\mathbf{a\in[90\,000,100\,000]}$:

90100, 90335, 90525, 90951, 91091, 91378, 91806, 92235, 92665, 93096, \
93528, 93961, 94395, 94830, 95266, 95703, 96141, 96580, 97020, 97461, \
97903, 98346, 98790, 99235, 99681

%





\section*{Acknowledgements}

The research of A.B. was supported in part by the HUN-REN Hungarian Research Network and by the NKFIH grants ANN130909 and K128088 of the Hungarian National Research, Development and Innovation Office.

\end{document}